\documentclass{biometrika}
\usepackage{amsmath}
\usepackage{amssymb}
\usepackage{mathtools}
\usepackage{bm}
\usepackage{microtype}
\usepackage[dvipsnames]{xcolor}
\usepackage[colorlinks=true,citecolor=MidnightBlue,linkcolor=MidnightBlue,urlcolor=MidnightBlue]{hyperref}
\hypersetup{
  pdftitle={Tensor-normal maximum likelihood estimation at the operator-norm sample threshold},
  pdfauthor={Hengzhi He and Guang Cheng},
  pdfsubject={Maximum likelihood estimation in tensor-normal models},
  pdfkeywords={tensor normal model, Kronecker covariance, maximum likelihood, operator norm, geodesic convexity}
}
\newcommand{\R}{\mathbb{R}}
\newcommand{\cP}{\mathcal{P}}
\newcommand{\cG}{\mathcal{G}}
\newcommand{\cE}{\mathcal{E}}
\newcommand{\Mat}{\operatorname{Mat}}
\newcommand{\Sym}{\operatorname{Sym}}
\newcommand{\PD}{\operatorname{PD}}
\newcommand{\tr}{\operatorname{tr}}

\newcommand{\eps}{\varepsilon}
\newcommand{\dmax}{d_{\max}}
\newcommand{\dmin}{d_{\min}}
\newcommand{\norm}[1]{\left\lVert #1\right\rVert}
\newcommand{\abs}[1]{\left\lvert #1\right\rvert}
\newcommand{\ip}[2]{\left\langle #1,#2\right\rangle}
\newcommand{\Prob}{\mathbb{P}}
\newcommand{\E}{\mathbb{E}}
\newcommand{\PiZero}{\Pi_0}
\numberwithin{equation}{section}
\makeatletter
\def\ps@plain{\ps@empty}
\let\ps@biom\ps@empty
\let\ps@biometrika\ps@empty
\makeatother
\begin{document}
\jname{}
\jyear{}
\copyrightinfo{}
\title{Tensor-normal maximum likelihood estimation at the operator-norm sample threshold}
\author{Hengzhi He and Guang Cheng}
\affil{Department of Statistics and Data Science, University of California, Los Angeles\
\email{hengzhihe@g.ucla.edu; guangcheng@stat.ucla.edu}}
\markboth{H. He and G. Cheng}{Tensor-normal maximum likelihood estimation}
\maketitle
\begin{abstract}
Let $X_1,\ldots,X_n$ be independent Gaussian tensors in
$\R^{d_1}\otimes\cdots\otimes\R^{d_k}$ whose covariance is a Kronecker
product of $k$ unknown positive-definite factors, and put
$D=\prod_{a=1}^k d_a$ and $\dmax=\max_a d_a$.  A recent result of
\citet{franks2026near} established condition-number-free
guarantees for the tensor-normal maximum likelihood estimator under the
sample threshold $nD\gtrsim k^2\dmax^3$ and asked whether the cubic dependence
on $\dmax$ could be replaced by $\dmax^2$.  We prove that it can.  For
$t\geq1$, if $nD\geq Ck^2\dmax^2t^2$, then with high probability the maximum
likelihood estimator exists uniquely and
\[
 d_{\rm FR}(\widehat\Theta,\Theta)
 \leq C t\sqrt{k}\,\dmax/\sqrt n,
 \qquad
 d_{\rm FR}(\widehat\Theta_a,\Theta_a)
 \leq C t\sqrt{k d_a}\,\dmax/\sqrt{nD}.
\]
For every mode of largest dimension, we also obtain the sharp Thompson bound
$d_{\rm op}(\widehat\Theta_a,\Theta_a)\leq
Ct\dmax/\sqrt{nD}$.
The bounds are uniform over the unknown factors and require no condition-number
bound or sparsity.  Gaussian submodel lower bounds match the full and
largest-factor Fisher--Rao rates up to $\sqrt{k}$ and the largest-factor
Thompson rate up to constants.  For fixed $k$, the dependence of the sample
threshold on $\dmax$ is optimal.

GPT-5.6 Sol and Claude Fable 5 were used to assist with proof development, verification and paper writing. 
\end{abstract}
\begin{keywords}
geodesic convexity; Kronecker covariance; maximum likelihood; operator norm;
tensor normal model
\end{keywords}
\section{Introduction}
Consider the tensor-normal model
\begin{equation}
 X_i\sim N\left(0,\Sigma_1\otimes\cdots\otimes\Sigma_k\right),
 \qquad i=1,\ldots,n,                                      \label{eq:model-intro}
\end{equation}
where $\Sigma_a\in\PD(d_a)$, $D=\prod_a d_a$,
$\dmax=\max_a d_a$ and $\dmin=\min_a d_a$.  The likelihood may be unbounded,
and its maximizer need not exist or be unique.  Exact almost-sure existence thresholds have been
studied algebraically \citep{drton2021existence,derksen2021matrix,
derksen2022tensor}.  We instead study condition-number-free nonasymptotic
localization of the maximum likelihood estimator.

The direct predecessor is \citet{franks2026near}.  They proved existence,
uniqueness and Fisher--Rao error bounds under
$nD\gtrsim k^2\dmax^3$, and asked whether the cubic dependence on $\dmax$
could be replaced by $\dmax^2$.  Theorem~\ref{thm:main} answers this question
affirmatively: for $t\geq1$, the condition
$nD\geq Ck^2\dmax^2t^2$ suffices.  For $k=2$ this also gives the threshold
$n\gtrsim\dmax/\dmin$ without logarithmic factors, although our bound for the
smaller factor is weaker than the refined rate in their Theorem 1.11.

The proof starts from Theorem C.1 in the supplementary material to
\citet{franks2026near}, which controls the entire traceless matrix space.  It
therefore gives a random Gram bound on
\[
 \R\oplus\bigoplus_{a=1}^k\Mat_0(d_a;\R).
\]
A tangent transported away from the identity is generally nonsymmetric, so
this full-space statement is essential.  Exact conjugation transports the
Gram bound to a fixed Thompson ball.  We then combine sensitivity of the
constrained optimizer, an equivariant Kirszbraun extension and Gaussian
concentration to localize each factor in operator norm.  The resulting
optimizer lies in the interior of the ball and is the unconstrained MLE.

The paper is organized as follows.  Section 2 states the main result.
Sections 3--6 prove the Gram, transport, sensitivity and localization lemmas.
Section 7 proves the theorem, and Section 8 gives matching Gaussian lower
bounds.
\section{Model, geometry and main result}
\subsection{Likelihood and canonical factors}
Throughout, $\PD(d)$ denotes the cone of real $d\times d$ positive-definite
matrices,
\[
 \Sym_0(d)=\{A\in\R^{d\times d}:A=A^T,\ \tr A=0\},
 \qquad
 \Mat_0(d;\R)=\{A\in\R^{d\times d}:\tr A=0\}.
\]
It is convenient to parameterize by the precision factors
$\Theta_a=\Sigma_a^{-1}$.  Let
\[
 \cP=\left\{\Theta_1\otimes\cdots\otimes\Theta_k:
             \Theta_a\in\PD(d_a)\right\}.
\]
The factors are identifiable only up to reciprocal scalings.  We use the
canonical convention that $(\det\Theta_a)^{1/d_a}$ is the same for every
$a$.  Set
\[
 D=\prod_{a=1}^k d_a,\qquad \dmin=\min_a d_a,\qquad
 \dmax=\max_a d_a,\qquad M=nD,\qquad
 \rho_x=\frac{1}{M}\sum_{i=1}^n x_ix_i^T.
\]
Up to an irrelevant additive and multiplicative constant, the negative
log-likelihood is
\begin{equation}
 f_x(\Theta)=\tr(\rho_x\Theta)-{1\over D}\log\det\Theta,
 \qquad \Theta\in\cP.                                      \label{eq:likelihood}
\end{equation}
Whenever the minimizer exists uniquely, it is denoted by
$\widehat\Theta=\bigotimes_a\widehat\Theta_a$.
For an operator $A$ on $\bigotimes_a\R^{d_a}$ and a set of modes $J$, let
$A^{(J)}$ be its partial trace, characterized by
\begin{equation}
 \tr\{A^{(J)}H\}=\tr\{A H^{(J)}\},                         \label{eq:partial-trace}
\end{equation}
where $H^{(J)}$ acts as $H$ on the modes in $J$ and as the identity on the
remaining modes.  We abbreviate $A^{(a)}$ and $A^{(ab)}$.
\subsection{Affine-invariant coordinates}
At the identity, write a tangent as
\[
 H=(h_0,H_1,\ldots,H_k),\qquad
 H_a\in\Sym_0(d_a),\qquad
 \norm H^2=h_0^2+\sum_a\norm{H_a}_F^2,
\]
and associate the ambient generator
\begin{equation}
 K_H=h_0I_D+\sum_{a=1}^k\sqrt{d_a}\,H_a^{(a)}.                \label{eq:generator}
\end{equation}
If $\Theta=\bigotimes_a\Theta_a$, the corresponding exponential map is
\begin{align}
 \operatorname{Exp}_{\Theta}(H)
 &=\Theta^{1/2}e^{K_H}\Theta^{1/2} \notag\\
 &=e^{h_0}\bigotimes_{a=1}^k
   \left(\Theta_a^{1/2}e^{\sqrt{d_a}H_a}\Theta_a^{1/2}\right). \label{eq:exponential-map}
\end{align}
Here and below the square root is the principal square root.  The normalized
affine-invariant distance $d$ on $\cP$ is chosen so that a constant-speed
geodesic generated by $H$ has speed $\norm H$.  For positive-definite
matrices $A,B$ of the same dimension, the Fisher--Rao distance of the
centered Gaussian model is
\begin{equation}
 d_{\rm FR}(A,B)
 ={1\over\sqrt2}\norm{\log(A^{-1/2}BA^{-1/2})}_F.             \label{eq:fisher-rao-definition}
\end{equation}
For full $D\times D$ precisions, our normalized distance is therefore
\begin{equation}
 d(\Theta,\Theta')=\sqrt{2\over D}\,
 d_{\rm FR}(\Theta,\Theta').                                  \label{eq:metric-normalization}
\end{equation}
In particular, at the identity, if
$\theta=e^{h_0}\bigotimes_aP_a$ with $\det P_a=1$ and
$F_a=\log P_a$, then
\begin{equation}
 H_a={F_a\over\sqrt{d_a}},
 \qquad
 d(I_D,\theta)^2=h_0^2+\sum_{a=1}^k{\norm{F_a}_F^2\over d_a}. \label{eq:coordinate-distance-prelim}
\end{equation}
For positive-definite matrices of any dimension, we use the Thompson
distance
\[
 d_{\rm op}(A,B)=
 \norm{\log(A^{-1/2}BA^{-1/2})}_{\rm op}.
\]
The function \eqref{eq:likelihood} is geodesically convex on $\cP$;
see \citet{wiesel2012geodesic} and \citet[\S2.1]{franks2026near}.
For later use, if $x$ consists of standard Gaussian tensors, its gradient
at the identity is
\begin{equation}
 \nabla_0f_x=\tr\rho_x-1,
 \qquad
 \nabla_af_x=\sqrt{d_a}
 \left(\rho_x^{(a)}-{\tr\rho_x\over d_a}I_{d_a}\right).       \label{eq:gradient}
\end{equation}
We use the following concentration result from Proposition 2.11 of
\citet{franks2026near}: if $0<\eps<1$ and
$M\geq\dmax^2/\eps^2$, then, outside an event of probability at most
$2(k+1)\exp\{-\eps^2M/(8\dmax)\}$,
\begin{equation}
 \abs{\nabla_0f_x}\leq\eps,
 \qquad
 \norm{\nabla_af_x}_{\rm op}\leq{9\eps\over\sqrt{d_a}},
 \qquad
 \norm{\nabla f_x}\leq\sqrt{82k}\,\eps.                    \label{eq:gradient-concentration}
\end{equation}
\subsection{Main theorem}
Modes with $d_a=1$ have no traceless parameter and may be removed.  We state
the theorem for $d_a\geq2$; $k$ can always be replaced by the number of
nontrivial modes.
\begin{theorem}[Tensor-normal MLE at the quadratic threshold]
\label{thm:main}
There are universal constants $C,c,c'>0$ such that the following holds.
Let $t\geq1$, let $X_1,\ldots,X_n$ follow \eqref{eq:model-intro}, and suppose
\begin{equation}
 M=nD\geq Ck^2\dmax^2t^2.                                    \label{eq:main-threshold}
\end{equation}
For $a<b$, define
\begin{equation}
 p_{ab}=\min\left\{1,
 \left({c\sqrt M\over k(d_a+d_b)}\right)^{-c'(d_a+d_b)/2}
 \right\}.                                                    \label{eq:pab}
\end{equation}
Then, with probability at least
\begin{equation}
 1-Ck\exp\left\{-{cM\over k\dmax}\right\}
   -\sum_{a<b}p_{ab}-Ck\exp\{-ct^2\dmax\},                   \label{eq:success-probability}
\end{equation}
the MLE exists uniquely and
\begin{align}
 d_{\rm FR}(\widehat\Theta,\Theta)
 &\leq C{\sqrt{k}\,\dmax\over\sqrt n}\,t,                  \label{eq:main-full-rate}\\
 d_{\rm FR}(\widehat\Theta_a,\Theta_a)
 &\leq C{\sqrt{k d_a}\,\dmax\over\sqrt{nD}}\,t,
 \qquad a=1,\ldots,k,                                        \label{eq:main-factor-rate}\\
 \max_{a:d_a=\dmax}d_{\rm op}(\widehat\Theta_a,\Theta_a)
 &\leq C{\dmax\over\sqrt{nD}}\,t.                           \label{eq:main-largest-op-rate}
\end{align}
The factor representatives in these bounds are the canonical representatives
specified in Section~2.1.
In particular, a coarser form of the exceptional probability is
\begin{equation}
 Ck e^{-ct^2\dmax}
 +Ck^2\left({c\sqrt M\over k\dmax}\right)^{-c'\dmin},        \label{eq:coarse-probability}
\end{equation}
after changing universal constants.
\end{theorem}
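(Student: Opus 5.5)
By affine equivariance of the model, the likelihood and both distances, we may assume $\Theta=I_D$, so that the $X_i$ are standard Gaussian tensors. The plan has five steps.

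\textbf{Step 1: gradient and Hessian at the identity.} Apply \eqref{eq:gradient-concentration} with $\eps\asymp t\dmax/\sqrt M$. The threshold \eqref{eq:main-threshold} gives $M\geq\dmax^2/\eps^2$. This bounds $\norm{\nabla f_x}\lesssim\sqrt k\,t\dmax/\sqrt M$, and bounds each $\norm{\nabla_af_x}_{\rm op}\lesssim t\dmax/\sqrt{Md_a}$. The failure probability is of order $k\exp(-ct^2\dmax)$.

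Next, invoke Theorem C.1 of the supplement to \citet{franks2026near}. It gives a lower Gram (strong convexity) bound for the Hessian at the identity on the full space $\R\oplus\bigoplus_a\Mat_0(d_a;\R)$. Its off-diagonal blocks are controlled by the mixed partial traces $\rho^{(ab)}$. The failure probability is $\sum_{a<b}p_{ab}+Ck\exp\{-cM/(k\dmax)\}$.

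\textbf{Step 2: transport to a Thompson ball.} Let
\[
 B=\{\theta: d_{\rm op}(\theta_a,I)\leq\delta\ \text{for all }a\}
\]
with $\delta$ a small constant. At a point $\theta\in B$, write the Hessian of $f_x$ by conjugating $\rho_x$ with $\theta^{1/2}$. The transported tangent directions $\theta_a^{1/2}H_a\theta_a^{-1/2}$ are nonsymmetric but traceless. This is exactly why the full-space version of the Gram bound is needed. Conjugation by matrices within $e^{\pm\delta}$ of $I$ perturbs the Gram form by only a factor $e^{O(\delta)}$. Hence $f_x$ is uniformly $\lambda$-strongly geodesically convex on $B$ for a constant $\lambda$.

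\textbf{Step 3: minimize over the ball.} Let $\theta^*$ be the minimizer of $f_x$ over the geodesically convex set $B$. Strong convexity together with Step 1 gives
\[
 d(I,\theta^*)\lesssim\norm{\nabla f_x}/\lambda .
\]
Multiplying by $\sqrt{D/2}$ converts this via \eqref{eq:metric-normalization} into \eqref{eq:main-full-rate}. This Frisch-type bound controls only $\sum_a\norm{F_a}_F^2/d_a$, which is too weak to show $\theta^*$ is interior in operator norm.

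\textbf{Step 4: operator-norm localization (the main obstacle).} The task is to show that each factor of $\theta^*$ satisfies $\norm{F_a}_{\rm op}\lesssim t\dmax/\sqrt M\ll\delta$, i.e.\ that it lies in the interior of $B$. The plan:
\begin{enumerate}
\item View $\theta^*_a$ as a function of the data.
\item Use sensitivity of the constrained strongly convex optimizer: perturbing the data perturbs the minimizer by at most (gradient perturbation)$/\lambda$. This makes the map $x\mapsto\log\theta^*_a$ Lipschitz on the good event, with constant about $\sqrt{\dmax/M}$ in operator-norm–relevant directions.
\item Extend this map to all of the data space by an $O(d_a)$-equivariant Kirszbraun extension, which preserves the Lipschitz constant and the symmetry.
\item Apply Gaussian concentration to $\langle v,\log\theta^*_a v\rangle$ and take a net over unit vectors $v$. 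This costs $e^{O(d_a)}$, which is absorbed by $t^2\dmax$.
\item Handle the mean by first-order expansion: $F_a\approx-\nabla_af_x\sqrt{d_a}/\lambda$ plus cross-mode corrections controlled by the Gram bound.
\end{enumerate}
The delicate points are making the Lipschitz constant scale correctly and controlling the mean. I would try first to bound the mean via equivariance, since the expectation is a multiple of $I$ and is removed by the trace normalization.

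\textbf{Step 5: conclusion.} Once $\theta^*$ is interior to $B$, it is a critical point of the geodesically convex $f_x$ on $\cP$. Strict convexity then makes it the unique unconstrained MLE. The bound \eqref{eq:main-factor-rate} follows from $\norm{F_a}_F\leq\sqrt{d_a}\norm{F_a}_{\rm op}$. For $d_a=\dmax$, the bound \eqref{eq:main-largest-op-rate} is exactly the Step 4 estimate, after rescaling by the canonical determinant convention. The $h_0$ component is absorbed into $\tr\rho_x$ concentration. A union bound over the events in Steps 1 and 4 gives \eqref{eq:success-probability}. Bounding each $p_{ab}$ by its worst case, with $d_a+d_b\geq2\dmin$, gives \eqref{eq:coarse-probability}.
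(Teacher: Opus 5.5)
Your architecture is the paper's. It uses the full-space random-channel bound for a two-sided Gram bound on $\R\oplus\bigoplus_a\Mat_0(d_a;\R)$, and transport by nonsymmetric conjugated tangents. It then takes a constrained minimizer with a Lipschitz solution map, an equivariant Kirszbraun extension with zero mean, Gaussian concentration over a net, and interiority.

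\textbf{Where Step 2 fails.} With $\delta$ a fixed constant, Step 2 fails as written. Your ball $B$ contains products $\theta=\bigotimes_a\theta_a$ with $\norm{\log\theta}_{\rm op}$ as large as $k\delta$. The Hessian at $\theta$ is $M^{-1}\sum_i\norm{K_H\theta^{1/2}x_i}_2^2=M^{-1}\sum_i\norm{\theta^{1/2}K_Zx_i}_2^2$. Besides the blockwise conjugations, which do cost only $e^{O(\delta)}$, there is an outer factor $\theta^{1/2}$ controlled only by $e^{\pm k\delta/2}$.

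This is not an artefact of the proof. Suppose each $\theta_a$ has determinant one and eigenvalues $e^{\pm\delta}$ in equal numbers. Then the curvature in the scalar direction is $\tr(\rho_x\theta)\approx D^{-1}\tr\theta=(\cosh\delta)^k$, so the smoothness constant grows exponentially in $k$.

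Your sensitivity step needs that constant. Put $A_s=\Theta_s^{-1/2}\dot\Theta_s\Theta_s^{-1/2}$ along the geodesic $\Theta_s$ joining the two minimizers. The derivative of $f_x-f_y$ along it is bounded by $M^{-1}\norm{\Theta_s^{1/2}(x-y)}_2\{\norm{A_s\Theta_s^{1/2}x}_2+\norm{A_s\Theta_s^{1/2}y}_2\}$. Both factors are controlled only through $\norm{\Theta_s}_{\rm op}$ and the upper Hessian bound. So the Lipschitz constant of $x\mapsto F_a(\theta^*_x)$, and every tail bound built on it, would carry a factor $e^{ck}$. That is incompatible with universal constants.

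\textbf{The repair.} Shrink the ball to $\delta=c/k$, or, as the paper does, use the full Thompson ball $\{e^{-1}I_D\preceq\theta\preceq eI_D\}$. There $\kappa(G_a)\le\kappa(G)\le e$, and the Hessian is pinched between $\lambda_0e^{-3}$ and $L_0e^{3}$. Interiority then requires $\norm{F_a}_{\rm op}\lesssim 1/k$ for every mode. Your Step 4 level $t\dmax/\sqrt M\le C^{-1/2}/k$ supplies exactly this; it is the second place where the $k^2$ in the threshold is spent.

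With this change the argument goes through, but several details should be made explicit:
\begin{itemize}
\item You need the upper Gram bound as well as the lower one, for the reason just given.
\item The Lipschitz constant $C\sqrt{d_a/M}$ for $F_a$ comes from the solution-map bound combined with $\norm{F_a(\theta)-F_a(\theta')}_F\le C\sqrt{d_a}\,d(\theta,\theta')$. The latter follows by recovering $F_a$ as the traceless part of $(d_a/D)$ times the partial trace of $\log\theta$.
\item Kirszbraun alone gives no equivariance. Haar-average an extension over the local orthogonal group; $O(d_a)$ acting on mode $a$ suffices. This keeps the Lipschitz constant, and it agrees with the optimizer map on the good event because that event and the ball are invariant.
\item The mean is then exactly zero, so the first-order expansion in your item 5 is unnecessary.
\end{itemize}

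Finally, you derive the factor Fisher--Rao bound from $\norm{F_a}_F\le\sqrt{d_a}\norm{F_a}_{\rm op}$, whereas the paper uses $d_{\rm FR}(\widehat\Theta_a,I)^2\le(d_a/2)\,d(I,\widehat\Theta)^2$. Your route is valid and even removes the $\sqrt k$ from the $F_a$ part. This works because localization at level $t\dmax/\sqrt M$ holds for every mode with failure probability $e^{-ct^2\dmax}$.
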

The same argument gives a dimension-adaptive bound for every mode.  We record
it separately because simultaneous control of small modes has a different
tail probability.
\begin{corollary}[Factorwise Thompson bounds]
\label{cor:factor-op}
Under the assumptions of Theorem~\ref{thm:main}, with probability at least
\begin{equation}
 1-Ck\exp\left\{-{cM\over k\dmax}\right\}
 -\sum_{a<b}p_{ab}-C\sum_{a=1}^k e^{-ct^2d_a},                \label{eq:op-success-probability}
\end{equation}
the MLE exists uniquely and, simultaneously for every $a$,
\begin{equation}
 d_{\rm op}(\widehat\Theta_a,\Theta_a)
 \leq {Ct\over\sqrt{nD}}
       \left(d_a+{\dmax\over\sqrt{k}}\right).                \label{eq:factor-op-rate}
\end{equation}
For the largest modes, the additional failure probability is
$O(ke^{-ct^2\dmax})$ and is absorbed by \eqref{eq:success-probability}, giving
\eqref{eq:main-largest-op-rate}.
\end{corollary}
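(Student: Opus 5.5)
We prove Corollary~\ref{cor:factor-op} by rerunning the proof of Theorem~\ref{thm:main}. The only change is in the final Gaussian concentration step, where each factor is treated with a tail adapted to its own dimension. By affine equivariance of the MLE (conjugating by $\bigotimes_a\Theta_a^{1/2}$ preserves the likelihood structure and both distances), we may take $\Theta=I_D$ and standard Gaussian data. The deterministic part of the argument is retained unchanged. It consists of the full-space Gram bound from Theorem C.1 of \citet{franks2026near}, transported by exact conjugation to a fixed Thompson ball, together with the gradient concentration \eqref{eq:gradient-concentration}. On the event whose complement has probability at most $Ck\exp\{-cM/(k\dmax)\}+\sum_{a<b}p_{ab}$, this yields strong geodesic convexity of $f_x$ on the ball with a universal constant, under the threshold \eqref{eq:main-threshold}. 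It also yields a constrained minimizer $\widehat\Theta^{\rm B}$ with $d(I_D,\widehat\Theta^{\rm B})\lesssim\norm{\nabla f_x}\lesssim\sqrt k\,\dmax t/\sqrt M$.

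The factorwise step uses the sensitivity lemma. Each canonical factor $\log\widehat P_a=F_a$ of the constrained optimizer is a Lipschitz function of the data, and the equivariant Kirszbraun extension turns it into a globally Lipschitz, orthogonally equivariant function of the Gaussian sample. We would then bound $\norm{F_a}_{\rm op}$ by its mean plus a fluctuation. For the mean, we linearize $F_a$ as $-\sqrt{d_a}$ times the $a$-th component of the inverse Hessian applied to the gradient, plus a remainder. The linear term is of the order of $\sqrt{d_a}\norm{\nabla_af_x}_{\rm op}$. By \eqref{eq:gradient-concentration} with $\eps\asymp\dmax/\sqrt M$, or more sharply by a matrix Gaussian bound on $\rho_x^{(a)}-I$, this is of order $d_a/\sqrt M$. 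The off-diagonal Hessian blocks couple mode $a$ to the other modes, and this coupling contributes at most the global radius divided by a $\sqrt k$-type factor, namely $\dmax/\sqrt{kM}$. This gives the second term in \eqref{eq:factor-op-rate}. The quadratic remainder is controlled by the squared global radius and is absorbed under the threshold.

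For the fluctuation, we apply Gaussian concentration to the scalar $u^TF_au$ for a unit vector $u$, using its Lipschitz constant, which is of order $\sqrt{d_a}/\sqrt M$ times a constant in the $\ell_2$ norm on the data. A net argument over $u\in S^{d_a-1}$ costs $e^{Cd_a}$. Choosing the deviation level $t\,d_a/\sqrt M$ gives a failure probability $Ce^{-ct^2d_a}$ per mode, and a union bound over $a$ gives the last term in \eqref{eq:op-success-probability}. The Thompson distance is then $d_{\rm op}(\widehat\Theta_a,I)\le\norm{F_a}_{\rm op}+|h_0|$, where the scalar $h_0$ is at most $\eps\lesssim\dmax/\sqrt{kM}$, possibly after splitting it across modes by the canonical normalization.

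Interiority follows as in the theorem: every factor bound is well inside the ball under \eqref{eq:main-threshold}, so $\widehat\Theta^{\rm B}$ is the unconstrained MLE and is unique by strict geodesic convexity. For $d_a=\dmax$, the right side of \eqref{eq:factor-op-rate} is at most $2Ct\dmax/\sqrt M$, which gives \eqref{eq:main-largest-op-rate}. The extra tail for these modes is $O(ke^{-ct^2\dmax})$, which is already included in \eqref{eq:success-probability}.

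The main obstacle is the mean bound with the $\dmax/\sqrt k$ cross term. Here the inverse Hessian must be shown to mix modes only weakly in operator norm, not merely in Frobenius norm. To do this we would first try a Neumann-series expansion around the block-diagonal Hessian, bounding the off-diagonal partial-trace blocks $\rho^{(ab)}$ using the same event that defines $p_{ab}$.
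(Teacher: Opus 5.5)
There is a genuine gap, although one observation you already have the ingredients for closes it. Your architecture matches the paper's: reduction to $\Theta=I_D$, the equivariant Kirszbraun extension $\overline F_a$ with Lipschitz constant $C_R\sqrt{d_a/M}$, concentration of $u^T\overline F_au$ at level $td_a/\sqrt M$, a $9^{d_a}$ net, and a union bound over modes. But you split $F_a$ into a mean plus a fluctuation and leave the mean as the ``main obstacle''.

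The missing idea is that this mean is exactly zero. The averaged extension satisfies $\overline F_a((\otimes_bV_b)x)=V_a\overline F_a(x)V_a^T$ for every $(V_b)\in\prod_bO(d_b)$, and standard Gaussian data are invariant in law under this group. Hence $\E\overline F_a(x)$ commutes with every $V_a\in O(d_a)$, so it is a multiple of $I_{d_a}$, and since it is traceless it vanishes. Concentration of the centered scalar $u^T\overline F_a(x)u$, transferred to $F_a(\theta_x)$ on $\cG$ where the extension agrees with the optimizer map, gives $\norm{F_a(\theta_x)}_{\rm op}\leq Ctd_a/\sqrt M$ outside probability $2e^{-ct^2d_a}$, with no bias term.

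The $\dmax/\sqrt k$ term in \eqref{eq:factor-op-rate} does not come from Hessian cross blocks at all. It is the scalar share of the canonical factor: $d_{\rm op}(\widehat\Theta_a,I_{d_a})=\norm{(h_0/k)I_{d_a}+F_a}_{\rm op}$, with $|h_0|\leq d(I_D,\theta_x)\leq C\sqrt k\,t\dmax/\sqrt M$ from strong convexity and the gradient bound. Your stated bound $|h_0|\lesssim\eps\lesssim\dmax/\sqrt{kM}$ is not what is available; it is the division by $k$ that produces the $1/\sqrt k$.

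The route you propose for the mean would also not close at this threshold, so the gap is not merely presentational. The event defining $p_{ab}$ controls the cross blocks $B_{ab}$ only as Frobenius-to-Frobenius maps. Converting the $a$-th component of $\operatorname{Hess}f_x^{-1}\nabla f_x$ into $\norm{F_a}_{\rm op}$ therefore costs a factor $\sqrt{d_a}$, giving a cross contribution of order $\sqrt{d_a}\,t\dmax/\sqrt M$. When $d_a=\dmax$ this exceeds the target by a factor $\sqrt{\dmax}$.

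The Newton remainder is worse. At $M\asymp k^2\dmax^2t^2$ the global radius $r=\sqrt k\,t\dmax/\sqrt M$ is of order $k^{-1/2}$. Even with an $O(1)$ third-order constant, the remainder is then of order $r^2\asymp 1/k$ in the tangent norm, which is up to $\sqrt{d_a}/k$ in $\norm{F_a}_{\rm op}$. The target is $td_a/\sqrt M\asymp d_a/(k\dmax)$, so the remainder is too large by at least $\sqrt{\dmax}$. This is not ``absorbed under the threshold''. A pathwise perturbation bound of this kind does not reach the quadratic threshold; the equivariant extension avoids it by replacing any control of the mean with exact centering. Replace your mean step with that argument and the rest of your proof goes through.
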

For modes of small dimension the tail $e^{-ct^2d_a}$ in
\eqref{eq:op-success-probability} is weak: simultaneous control of every
mode at confidence $1-\delta$ requires
$t\gtrsim\{\log(k/\delta)/\dmin\}^{1/2}$.
The sharper pairwise probability \eqref{eq:success-probability} is the
appropriate statement if $k$ is allowed to grow.  For fixed $k$, the
condition is simply $n\gtrsim\dmax^2/D$ at constant confidence, subject of
course to the integer constraint $n\geq1$.
\section{A full-Lie orbit Gram bound}
\label{sec:full-lie}
The central observation is that the random channel estimate used in the
earlier work is naturally a statement on all traceless matrices.  Define
\[
 \mathfrak g=\R\oplus\bigoplus_{a=1}^k\Mat_0(d_a;\R),
 \qquad
 \norm Z^2=z_0^2+\sum_a\norm{Z_a}_F^2,
\]
where the $Z_a$ need not be symmetric, and put
\begin{equation}
 K_Z=z_0I_D+\sum_{a=1}^k\sqrt{d_a}\,Z_a^{(a)},
 \qquad
 \cE_x(Z)={1\over M}\sum_{i=1}^n\norm{K_Zx_i}_2^2.            \label{eq:orbit-energy}
\end{equation}
\begin{lemma}[Full-Lie Gram event]
\label{lem:full-lie}
Let $x_1,\ldots,x_n$ be independent $N(0,I_D)$ tensors.  If
$M\geq Ck^2\dmax^2$, there is an event $\cG$, invariant under
$\prod_aO(d_a)$, on which
\begin{equation}
 \lambda_0\norm Z^2\leq\cE_x(Z)\leq L_0\norm Z^2,
 \qquad Z\in\mathfrak g,                                     \label{eq:full-lie-bound}
\end{equation}
for universal constants $0<\lambda_0<L_0<\infty$.  Moreover,
\begin{equation}
 \Prob(\cG^c)\leq
 Ck\exp\left\{-{cM\over k\dmax}\right\}+\sum_{a<b}p_{ab},  \label{eq:gram-failure}
\end{equation}
where $p_{ab}$ is defined in \eqref{eq:pab}.
Here $(U_1,\ldots,U_k)\in\prod_aO(d_a)$ acts on the data by
$x_i\mapsto(\bigotimes_aU_a)x_i$.
\end{lemma}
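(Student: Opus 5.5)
We will prove the bound by writing $\cE_x(Z)$ as a quadratic form in $Z$ and comparing it with its expectation block by block. Expand
\[
 \cE_x(Z)=\ip{Z}{\mathcal A_x Z},
\]
where the blocks of $\mathcal A_x$ are as follows. The $(0,0)$ entry is $\tr\rho_x$. The $(0,a)$ block is $\sqrt{d_a}\,\rho_x^{(a)}$, paired with $Z_a$ and taken symmetrically in $z_0$. The diagonal block $(a,a)$ is $Z_a\mapsto d_a\,\tr\{\rho_x^{(a)} Z_a^TZ_a\}$, or equivalently $d_a\tr\{\rho_x (Z_a^TZ_a)^{(a)}\}$. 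The off-diagonal block $(a,b)$ is $\sqrt{d_ad_b}\,\tr\{\rho_x^{(ab)}(Z_a^T\otimes Z_b)\}$, suitably symmetrized.

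\emph{Expectation.} Under $\E\rho_x=I_D/D$, the cross terms vanish because $\tr Z_a=0$. The diagonal term becomes $d_a\tr(Z_a^TZ_a)\cdot(D/d_a)/D=\norm{Z_a}_F^2$. Hence $\E\cE_x(Z)=\norm Z^2$, and it suffices to show that $\norm{\mathcal A_x-\mathrm{Id}}_{\rm op}\le 1/2$ on the whole space $\mathfrak g$. This gives the bound with $\lambda_0=1/2$ and $L_0=3/2$.

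\emph{Diagonal and $(0,a)$ blocks.} These are controlled by the single-mode partial traces: we need $\norm{d_a\rho_x^{(a)}-I_{d_a}}_{\rm op}\le\eps$ and $\abs{\tr\rho_x-1}\le\eps$. This is the same event as in \eqref{eq:gradient-concentration}, obtained by a standard net argument, or directly from Proposition~2.11 of \citet{franks2026near}, with $\eps$ a small multiple of $1/\sqrt{k}$. The $(0,a)$ contributions then total at most $\sum_a \abs{z_0}\norm{Z_a}_F\,O(\eps)$, which is $O(\sqrt k\,\eps)\norm Z^2$ by Cauchy--Schwarz. Hence $\eps\asymp 1/\sqrt k$ suffices, and the condition $M\gtrsim \dmax^2/\eps^2=k\dmax^2$ is implied by the hypothesis. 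The failure probability is $Ck\exp\{-cM/(k\dmax)\}$, matching the first term of \eqref{eq:gram-failure}.

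\emph{Off-diagonal blocks (main obstacle).} For $a<b$, the block is the operator
\[
 \Phi_{ab}:Z_b\mapsto\sqrt{d_ad_b}\,\bigl(\rho_x^{(ab)}\bigr)\text{ contracted with }Z_b,
\]
viewed as a map $\Mat_0(d_b)\to\Mat_0(d_a)$. This is precisely the quantum-channel / operator-norm expansion estimate of Theorem~C.1 in the supplement to \citet{franks2026near}. That theorem bounds the restriction of the channel $Y\mapsto d_a(\rho_x^{(ab)})$ to the traceless space by $O\bigl(k(d_a+d_b)/\sqrt M\bigr)$ with failure probability $p_{ab}$, and the constant $c$ in \eqref{eq:pab} is chosen to make this bound $\le c_1/k$. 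The key point we must verify is that the theorem is stated, or its proof works verbatim, for \emph{all} traceless matrices $\Mat_0$, not just symmetric ones. It is a bound on the operator norm of a real linear map on $\Mat_0(d_b;\R)$, derived from Gaussian matrix concentration (Pisier-type bounds) that does not use symmetry. We also need the nonsymmetric pairing $\tr\{\rho^{(ab)}(Z_a^T\otimes Z_b)\}$ to be exactly of the form that theorem controls. Granting this, the sum of the off-diagonal contributions is at most
\[
 \sum_{a\ne b}\frac{c_1}{k}\norm{Z_a}_F\norm{Z_b}_F\le c_1\sum_a\norm{Z_a}_F^2 ,
\]
so a small enough $c_1$ finishes the argument. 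Summing all deviations gives $\abs{\cE_x(Z)-\norm Z^2}\le\norm Z^2/2$.

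\emph{Invariance.} Define $\cG$ as the intersection of the events above, each phrased as an operator-norm bound on $\Mat_0$. Under $x_i\mapsto(\bigotimes U_a)x_i$, each $\rho_x^{(J)}$ is conjugated by the corresponding $U$'s, and $\Mat_0$ is preserved by $Z_a\mapsto U_a^TZ_aU_a$. The norms are therefore unchanged, so $\cG$ is invariant under $\prod_aO(d_a)$. The probability bound is then a union bound over the $k+1$ single-mode events and the $\binom k2$ pairwise events.
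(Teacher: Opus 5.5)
Your proposal is essentially the paper's proof. It uses the same block expansion of $\cE_x$, marginal concentration with $\eps\asymp 1/\sqrt{k}$ for the diagonal blocks and the scalar-to-mode star, and Theorem C.1 on the whole traceless space $\Mat_0(d_b;\R)$ to bound each cross block by $c_1/k$, via the pairing $\langle Z_a,\Phi_{ab}(Z_b^T)\rangle_F$ with the transpose commuting with traceless projection; invariance then follows from local orthogonal conjugation. One slip to fix: Theorem C.1 gives $\|B_{ab}\|_{F\to F}\le Cs^2(d_a+d_b)/\sqrt M$ with failure probability $s^{-c(d_a+d_b)}$, and the choice $s^2=c_0\sqrt M/\{k(d_a+d_b)\}$ yields the bound $c_1/k$ at cost $p_{ab}$, whereas a bound of order $k(d_a+d_b)/\sqrt M$, as you wrote it, would be at most $c_1/k$ only when $M\gtrsim k^4\dmax^2$.
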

\begin{proof}
Write $q=\tr\rho_x$ and
\[
 \Delta_a=d_a\rho_x^{(a)}-qI_{d_a}.
\]
Expanding \eqref{eq:orbit-energy} gives
\begin{align}
 \cE_x(Z)
 &=qz_0^2
 +2z_0\sum_a\sqrt{d_a}\,\tr(\rho_x^{(a)}Z_a)
 +\sum_a d_a\tr(\rho_x^{(a)}Z_a^TZ_a) \notag\\
 &\quad+2\sum_{a<b}\sqrt{d_ad_b}\,
       \tr\{\rho_x^{(ab)}(Z_a^T\otimes Z_b)\}.               \label{eq:energy-expansion}
\end{align}
The diagonal blocks obey
\begin{equation}
 \left|d_a\tr(\rho_x^{(a)}Z_a^TZ_a)-q\norm{Z_a}_F^2\right|
 \leq\norm{\Delta_a}_{\rm op}\norm{Z_a}_F^2.                 \label{eq:diagonal-error}
\end{equation}
The scalar-to-mode block has norm
$\norm{\Delta_a}_F/\sqrt{d_a}$.
For a pair $a<b$, flatten the samples over these two modes.  This produces
$N=M/(d_ad_b)$ independent $d_a\times d_b$ standard Gaussian Kraus
matrices $A_1,\ldots,A_N$ and the map
\begin{equation}
 \Phi_{ab}(K)={1\over M}\sum_{r=1}^N A_rKA_r^T.               \label{eq:pair-map}
\end{equation}
With the Frobenius inner product, the cross term in
\eqref{eq:energy-expansion} is
\begin{equation}
 \tr\{\rho_x^{(ab)}(Z_a^T\otimes Z_b)\}
 =\ip{Z_a}{\Phi_{ab}(Z_b^T)}_F.                               \label{eq:cross-map}
\end{equation}
Let $\Pi_a$ be orthogonal projection onto $\Mat_0(d_a)$ and let
$T(K)=K^T$.  Thus the cross block on traceless spaces is
\[
 B_{ab}=\sqrt{d_ad_b}\,\Pi_a\Phi_{ab}T\Pi_b.
\]
The transpose is an isometry and commutes with traceless projection.
We use the following form of the random-channel bound of
\citet{franks2026near}, stated as Theorem C.1 in their supplementary
material and invoked there, with Kraus dimensions $(d_a,d_b)$, in the proof
of their Theorem 2.16: for every $s\geq2$, with failure probability at most
$s^{-c(d_a+d_b)}$,
\begin{equation}
 \norm{\left(\sum_{r=1}^N A_r\otimes A_r\right)\Pi_b}_{F\to F}
 \leq Cs^2\sqrt N(d_a+d_b),                                  \label{eq:pisier-input}
\end{equation}
where $\norm{\cdot}_{F\to F}$ denotes the operator norm induced by the
Frobenius norm on matrix space.  Importantly, \eqref{eq:pisier-input} is a
bound on the entire traceless matrix space, not only on its symmetric part:
it controls the composition $\Phi\circ\Pi$ on all of $\Mat(d_b;\R)$,
whereas the quantum-expansion functional of their Definition 2.14 restricts
both arguments to traceless symmetric matrices.  Set
\begin{equation}
 s_{ab}^2={c_0\sqrt M\over k(d_a+d_b)}.                         \label{eq:sab}
\end{equation}
The constants are chosen in the following order: the small numerical
$\beta$ below and a target $c_1$ are fixed first, then $c_0$ is taken small
enough that \eqref{eq:cross-bound} holds with this $c_1$, and finally the
sample constant $C$ in the hypothesis $M\geq Ck^2\dmax^2$ is taken large;
since $d_a+d_b\leq2\dmax$, this last choice makes $s_{ab}\geq2$.  Since
$N=M/(d_ad_b)$, equations \eqref{eq:pair-map} and
\eqref{eq:pisier-input} yield
\begin{equation}
 \norm{B_{ab}}_{F\to F}
 \leq {\sqrt{d_ad_b}\over M}
 Cs_{ab}^2\sqrt N(d_a+d_b)\leq{c_1\over k},                  \label{eq:cross-bound}
\end{equation}
where $c_1$ is made arbitrarily small by choosing $c_0$ small.  The failure
probability is bounded by \eqref{eq:pab}, after adjusting constants.
Fix a small numerical $\beta>0$ and apply
\eqref{eq:gradient-concentration} with
$\eps=\beta/(9\sqrt{k})$.  Outside an event of probability at most
$Ck\exp\{-cM/(k\dmax)\}$,
\begin{equation}
 |q-1|\leq{\beta\over9\sqrt{k}},
 \qquad
 \max_a\norm{\Delta_a}_{\rm op}\leq{\beta\over\sqrt{k}}.  \label{eq:marginal-good}
\end{equation}
The prerequisite $M\geq\dmax^2/\eps^2$ in
\eqref{eq:gradient-concentration} is satisfied because it is
$M\geq81k\dmax^2/\beta^2$, which follows from
$M\geq Ck^2\dmax^2$ once the universal constant $C$ is large enough.
The block matrix formed by the mode-to-mode off-diagonal terms has norm at
most $(k-1)c_1/k$.  Indeed, for a symmetric block matrix $B$,
\begin{equation}
 \norm{B}_{\rm op}
 \leq\norm{(\norm{B_{ab}}_{\rm op})_{a,b}}_{\rm op}
 \leq\max_a\sum_{b\ne a}\norm{B_{ab}}_{\rm op},              \label{eq:block-gershgorin}
\end{equation}
which is the block Gershgorin estimate used here.  The diagonal perturbation
is at most $\beta/\sqrt{k}$, while the norm of the scalar-to-mode star is
bounded by
\begin{equation}
 \left(\sum_a{\norm{\Delta_a}_F^2\over d_a}\right)^{1/2}
 \leq\left(\sum_a\norm{\Delta_a}_{\rm op}^2\right)^{1/2}
 \leq\beta.                                                    \label{eq:star-bound}
\end{equation}
Therefore the full Gram operator differs from $qI$ by less than a fixed
constant, which proves \eqref{eq:full-lie-bound} after choosing $c_1$ and
$\beta$ sufficiently small.  The union bound over the pairs gives
\eqref{eq:gram-failure}.
The event can be defined as the event that the finite-dimensional Gram
operator has spectrum in $[\lambda_0,L_0]$, so its invariance under the
local orthogonal group is immediate.
\end{proof}
\section{Transport to a fixed Thompson ball}
\label{sec:transport}
For a fixed constant $R>0$, define
\begin{equation}
 \mathcal C_R=\left\{\Theta\in\cP:
 e^{-R}I_D\preceq\Theta\preceq e^RI_D\right\}.                 \label{eq:thompson-ball}
\end{equation}
\medskip\noindent{\itshape Choice of the radius.}
We henceforth fix $R=1$; all constants depending on $R$ are universal.
The set $\mathcal C_R$ is closed, compact and geodesically convex.  Indeed,
write its canonical factors as
$\Theta=e^{h_0}\bigotimes_aP_a$, with $\det P_a=1$.  The order interval
gives $|h_0|\leq R$ and
$\kappa(P_a)\leq\kappa(\Theta)\leq e^{2R}$.  Determinant normalization then
bounds every eigenvalue of every $P_a$ above and away from zero.  Every
sequence in $\mathcal C_R$ therefore has a subsequence whose canonical factors
converge to a point still in $\mathcal C_R$.  Thus $\mathcal C_R$ is compact
and hence closed.
The Kronecker manifold is totally geodesic under the affine-invariant
metric, and monotonicity of the weighted matrix geometric mean preserves
the order interval in \eqref{eq:thompson-ball}; see
\citet{wiesel2012geodesic}.
\begin{lemma}[Uniform Hessian bounds]
\label{lem:hessian-transport}
On the event $\cG$ of Lemma~\ref{lem:full-lie},
\begin{equation}
 \lambda_R I\preceq\operatorname{Hess}f_x(\Theta)
 \preceq L_RI,
 \qquad \Theta\in\mathcal C_R,                                \label{eq:uniform-hessian}
\end{equation}
where $\lambda_R=\lambda_0e^{-3R}$ and $L_R=L_0e^{3R}$.
\end{lemma}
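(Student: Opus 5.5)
The plan is to compute the Hessian at an arbitrary $\Theta\in\mathcal C_R$ by pulling it back to a Gram form at the identity, with a conjugated tangent, and then to invoke Lemma~\ref{lem:full-lie}. Because $f_x$ is the trace of $\rho_x$ against a matrix exponential plus the linear $\log\det$ term, its second derivative along the geodesic $s\mapsto\operatorname{Exp}_\Theta(sH)$ is
\[
 \frac{d^2}{ds^2}f_x\bigl(\Theta^{1/2}e^{sK_H}\Theta^{1/2}\bigr)\Big|_{s=0}
 =\tr\bigl(\rho_x\Theta^{1/2}K_H^2\Theta^{1/2}\bigr)
 =\frac1M\sum_i\norm{K_H\Theta^{1/2}x_i}_2^2 .
\]
The $\log\det$ term is linear in $s$, and geodesics are the curves $s\mapsto\Theta^{1/2}e^{sK_H}\Theta^{1/2}$, so the Riemannian Hessian is exactly this quadratic form. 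It therefore suffices to show
\[
 \lambda_R\norm H^2\le\frac1M\sum_i\norm{K_H\Theta^{1/2}x_i}_2^2\le L_R\norm H^2 .
\]

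\emph{Conjugation step.} Write $\Theta^{1/2}=e^{h_0/2}\bigotimes_aQ_a$ with $Q_a=P_a^{1/2}$ and $\det Q_a=1$. Then
\[
 K_H\Theta^{1/2}=\Theta^{1/2}\,\Theta^{-1/2}K_H\Theta^{1/2}
 =e^{h_0/2}\Bigl(\bigotimes_aQ_a\Bigr)K_Z ,
\]
where $Z=(h_0',Z_1,\dots,Z_k)$ with $h_0'$ equal to the scalar part of $H$ and $Z_a=Q_a^{-1}H_aQ_a$. The generator $K_H$ is a sum of single-mode terms, and conjugation by a Kronecker product acts modewise. Moreover $\tr Z_a=\tr H_a=0$, so $Z\in\mathfrak g$, but $Z_a$ is in general \emph{not symmetric}. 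This is exactly why the full-Lie version of Lemma~\ref{lem:full-lie} is required.

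Writing $\Theta^{1/2}$ on the left, we get
\[
 \norm{K_H\Theta^{1/2}x_i}_2^2=\norm{\Theta^{1/2}K_Zx_i}_2^2 ,
\]
and
\[
 e^{-R}\norm{K_Zx_i}^2\le\norm{\Theta^{1/2}K_Zx_i}^2\le e^{R}\norm{K_Zx_i}^2
 \qquad\text{since } e^{-R}I\preceq\Theta\preceq e^RI .
\]
Summing over $i$ and applying \eqref{eq:full-lie-bound} gives
\[
 \lambda_0e^{-R}\norm Z^2\le\operatorname{Hess}f_x(\Theta)[H,H]\le L_0e^{R}\norm Z^2 .
\]

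\emph{Norm comparison.} It remains to compare $\norm Z$ with $\norm H$. We have
\[
 \norm{Z_a}_F\le\norm{Q_a^{-1}}_{\rm op}\norm{Q_a}_{\rm op}\norm{H_a}_F=\kappa(P_a)^{1/2}\norm{H_a}_F ,
\]
and symmetrically $\norm{H_a}_F\le\kappa(P_a)^{1/2}\norm{Z_a}_F$. As noted after \eqref{eq:thompson-ball}, $\kappa(P_a)\le\kappa(\Theta)\le e^{2R}$, so $\norm{Z}^2$ lies between $e^{-2R}\norm H^2$ and $e^{2R}\norm H^2$; the scalar coordinate is unchanged. Combining the two factors gives the constants $\lambda_0e^{-3R}$ and $L_0e^{3R}$.

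The main point to check carefully is the conjugation identity: that $\Theta^{-1/2}K_H\Theta^{1/2}$ is again of the form $K_Z$ with the same $\sqrt{d_a}$ weights and with traceless $Z_a$. This holds because $Q_a^{(a)}$ commutes with $H_b^{(b)}$ for $b\ne a$. Also check that the event $\cG$ is a single event independent of $\Theta$, so the bound holds uniformly over $\mathcal C_R$. Both follow directly, since $\cG$ is defined as a spectral condition on the full Gram operator over all of $\mathfrak g$.
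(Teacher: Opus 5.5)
Your proposal is correct and follows the paper's proof essentially step for step. You write the Hessian as the Gram form along $s\mapsto\Theta^{1/2}e^{sK_H}\Theta^{1/2}$, use the intertwining $K_H\Theta^{1/2}=\Theta^{1/2}K_Z$ with $Z_a=Q_a^{-1}H_aQ_a$ traceless but nonsymmetric, and invoke the full-Lie Gram event; the factor $e^{\pm R}$ comes from $e^{-R}I\preceq\Theta\preceq e^RI$ and the factor $e^{\pm 2R}$ from conjugation via $\kappa(P_a)\le\kappa(\Theta)\le e^{2R}$. The only difference is that you derive the Hessian identity directly as a second derivative along the geodesic, rather than citing Lemma 2.12 of \citet{franks2026near}, which is a harmless, self-contained substitution.
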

\begin{proof}
Let $G=\Theta^{1/2}$ be the principal product square root.  For a symmetric
statistical tangent $H=(h_0,H_1,\ldots,H_k)$, the Hessian identity and its
equivariance \citep[Lemma 2.12 and Remark 2.13]{franks2026near} give
\begin{equation}
 \operatorname{Hess}f_x(\Theta)[H,H]
 ={1\over M}\sum_{i=1}^n\norm{K_HGx_i}_2^2.                    \label{eq:hessian-away}
\end{equation}
At the level of the product representation,
\begin{equation}
 K_HG=GK_Z,
 \qquad z_0=h_0,\qquad Z_a=G_a^{-1}H_aG_a.                     \label{eq:intertwining}
\end{equation}
Indeed, mode by mode,
$H_a^{(a)}G=G(G_a^{-1}H_aG_a)^{(a)}$; summing these identities and the scalar
term gives \eqref{eq:intertwining}.
Although $H_a$ is symmetric, $Z_a$ need not be.  This is the reason the
Gram event in Lemma~\ref{lem:full-lie} must hold on all of
$\Mat_0(d_a)$.
The order bound in \eqref{eq:thompson-ball} implies
\begin{equation}
 \sigma(G)^2\subset[e^{-R},e^R],
 \qquad
 \kappa(G_a)\leq\kappa(G)=\prod_b\kappa(G_b)\leq e^R.          \label{eq:condition-factors}
\end{equation}
Consequently,
\begin{equation}
 e^{-2R}\norm H^2\leq\norm Z^2\leq e^{2R}\norm H^2.           \label{eq:conjugation-norm}
\end{equation}
Combining \eqref{eq:hessian-away}, \eqref{eq:intertwining},
Lemma~\ref{lem:full-lie}, the singular-value bound on $G$, and
\eqref{eq:conjugation-norm} proves \eqref{eq:uniform-hessian}.  The three
exponential factors correspond respectively to multiplication by $G$ and
the two-sided conjugation of each tangent block.
\end{proof}
\section{Sensitivity of a constrained maximum likelihood estimator}
\label{sec:sensitivity}
For data $x$ in $\cG$, let $\theta_x$ denote the minimizer of $f_x$ over
$\mathcal C_R$.  It exists by compactness and is unique by
Lemma~\ref{lem:hessian-transport}.
\begin{lemma}[Lipschitz solution map]
\label{lem:solution-map}
For $x,y\in\cG$,
\begin{equation}
 d(\theta_x,\theta_y)
 \leq {C_R\over\sqrt M}\norm{x-y}_2,                           \label{eq:solution-lipschitz}
\end{equation}
where $x$ and $y$ denote the corresponding stacked samples in $\R^M$.
\end{lemma}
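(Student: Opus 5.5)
We compare the first-order optimality conditions of the two constrained problems and use the strong convexity from Lemma~\ref{lem:hessian-transport}.

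\textbf{Step 1: strong convexity along the connecting geodesic.} Since $\mathcal C_R$ is geodesically convex, the geodesic $\gamma(s)$ from $\theta_x$ to $\theta_y$ stays in $\mathcal C_R$. Both $x$ and $y$ lie in $\cG$, so $f_x$ and $f_y$ are $\lambda_R$-strongly geodesically convex there. Let $V$ be the initial velocity at $\theta_x$, so $\norm V=d(\theta_x,\theta_y)$. Constrained optimality of $\theta_x$ gives $\ip{\nabla f_x(\theta_x)}{V}\geq0$. Likewise optimality of $\theta_y$ gives $\ip{\nabla f_y(\theta_y)}{-V'}\geq0$, where $V'$ is the velocity of $\gamma$ at $\theta_y$. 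Integrating the Hessian lower bound along $\gamma$ for $f_x$ yields
\[
\lambda_R\, d(\theta_x,\theta_y)^2\leq \ip{\nabla f_x(\theta_y)}{V'}-\ip{\nabla f_x(\theta_x)}{V}
\leq \ip{\nabla f_x(\theta_y)-\nabla f_y(\theta_y)}{V'} .
\]
By Cauchy--Schwarz in the tangent space at $\theta_y$,
\[
d(\theta_x,\theta_y)\leq\lambda_R^{-1}\norm{\nabla(f_x-f_y)(\theta_y)}.
\]

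\textbf{Step 2: the gradient difference is controlled by $\norm{x-y}_2$.} Set $G=\theta_y^{1/2}$. By the same equivariance that underlies \eqref{eq:hessian-away}, the directional derivative of $f_x$ at $\theta_y$ in direction $H$ is
\[
\frac1M\sum_i\ip{Gx_i}{K_HGx_i}-h_0.
\]
The constant term cancels in $f_x-f_y$. The remaining difference is
\[
\frac1M\sum_i\Bigl(\ip{G(x_i-y_i)}{K_HGx_i}+\ip{G y_i}{K_HG(x_i-y_i)}\Bigr),
\]
using $K_H=K_H^T$. Apply Cauchy--Schwarz over $i$. This bounds the difference by
\[
\frac{\norm G_{\rm op}}{\sqrt M}\norm{x-y}_2\Bigl[\cE_{x}(\cdot)^{1/2}+\cE_y(\cdot)^{1/2}\Bigr],
\]
where each energy is evaluated at $K_HG$. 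By \eqref{eq:intertwining}, $K_HG=GK_Z$. By Lemma~\ref{lem:full-lie} on $\cG$ and \eqref{eq:conjugation-norm},
\[
\Bigl(\frac1M\sum_i\norm{K_HGx_i}^2\Bigr)^{1/2}\leq e^{R/2}\sqrt{L_0}\,e^{R}\norm H ,
\]
and the same bound holds for $y$. Hence
\[
\norm{\nabla(f_x-f_y)(\theta_y)}\leq C_R'\norm{x-y}_2/\sqrt M .
\]
Combining with Step 1 gives \eqref{eq:solution-lipschitz} with $C_R=C_R'/\lambda_R$.

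\textbf{Main obstacle.} The delicate point is Step 1, specifically the first-order variational inequality on a constrained Riemannian domain. Two things need checking: that the Riemannian gradients are taken in the normalized metric fixed in Section~2.2, and that parallel transport is never needed. It is not needed here, because I apply the fundamental theorem of calculus to the scalar function $s\mapsto \frac{d}{ds}f_x(\gamma(s))$. Its derivative is $\operatorname{Hess}f_x[\dot\gamma,\dot\gamma]\geq\lambda_R\norm{\dot\gamma}^2$, with constant speed $\norm{\dot\gamma}=d(\theta_x,\theta_y)$.

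Strong convexity is invoked only for $f_x$, but Step 2 uses the Gram bound for both samples, which is why both $x,y\in\cG$ are required. Uniqueness of the minimizers, noted before the lemma, ensures that $\theta_x$ and $\theta_y$ are well defined.
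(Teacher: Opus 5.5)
Your argument is correct and is essentially the paper's proof. Both rest on the same key estimate: the derivative of $f_x-f_y$ is bounded via the polarization identity $xx^T-yy^T=(x-y)x^T+y(x-y)^T$, Cauchy--Schwarz, and the upper Gram (equivalently, upper Hessian) bound, which is why both samples must lie in $\cG$. The only difference is cosmetic: you use strong monotonicity of $\nabla f_x$ with the two variational inequalities and bound the gradient difference only at $\theta_y$, whereas the paper adds the two function-value strong-convexity inequalities for $f_x$ and $f_y$ and integrates the derivative of $f_x-f_y$ along the whole geodesic. Both routes give the same constant $2e^{R/2}\sqrt{L_R}/\lambda_R$.
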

\begin{proof}
Let $d_*=d(\theta_x,\theta_y)$.  Strong geodesic convexity and the
variational inequality for a constrained minimizer give
\begin{align}
 f_x(\theta_y)-f_x(\theta_x)&\geq {\lambda_R\over2}d_*^2,
 \notag\\
 f_y(\theta_x)-f_y(\theta_y)&\geq {\lambda_R\over2}d_*^2.
                                                                    \label{eq:two-kkt}
\end{align}
To see the constrained first-order step explicitly, the directional
derivative at a minimizer is nonnegative along every feasible geodesic in
$\mathcal C_R$.  Integrating the lower Hessian bound along the geodesic from
that minimizer to the other point gives each inequality in
\eqref{eq:two-kkt}.
Adding these inequalities yields
\begin{equation}
 \lambda_Rd_*^2
 \leq(f_x-f_y)(\theta_y)-(f_x-f_y)(\theta_x).                  \label{eq:kkt-sum}
\end{equation}
Let $\Theta_s$, $0\leq s\leq1$, be the constant-speed geodesic between
the two minimizers and define
\[
 A_s=\Theta_s^{-1/2}\dot\Theta_s\Theta_s^{-1/2}.
\]
If $H_s$ is the corresponding statistical tangent, then $A_s=K_{H_s}$ and
the normalization in \eqref{eq:metric-normalization} gives
\begin{equation}
 {1\over D}\tr(A_s^2)=\norm{H_s}^2=d_*^2.                     \label{eq:speed-normalization}
\end{equation}
Because the log-determinant terms cancel from $f_x-f_y$, differentiation
along the geodesic and the identity
$xx^T-yy^T=(x-y)x^T+y(x-y)^T$ give
\begin{align}
 \left|{d\over ds}(f_x-f_y)(\Theta_s)\right|
 &\leq {1\over M}\norm{\Theta_s^{1/2}(x-y)}_2
 \left\{\norm{A_s\Theta_s^{1/2}x}_2
       +\norm{A_s\Theta_s^{1/2}y}_2\right\}.                  \label{eq:derivative-difference}
\end{align}
Here $A_s$ acts block-diagonally on the stacked observations.  Since the
geodesic remains in $\mathcal C_R$,
$\norm{\Theta_s^{1/2}(x-y)}_2\leq e^{R/2}\norm{x-y}_2$.  The
upper Hessian bound gives
\begin{equation}
 {1\over M}\norm{A_s\Theta_s^{1/2}x}_2^2\leq L_Rd_*^2,
 \qquad
 {1\over M}\norm{A_s\Theta_s^{1/2}y}_2^2\leq L_Rd_*^2.        \label{eq:upper-hessian-data}
\end{equation}
Thus
\begin{equation}
 \left|{d\over ds}(f_x-f_y)(\Theta_s)\right|
 \leq {2e^{R/2}\sqrt{L_R}\over\sqrt M}\norm{x-y}_2d_*.        \label{eq:derivative-final}
\end{equation}
Integrating, substituting into \eqref{eq:kkt-sum}, and cancelling $d_*$
proves \eqref{eq:solution-lipschitz}; the case $d_*=0$ is immediate.
\end{proof}
\section{Equivariant operator-norm localization}
\label{sec:localization}
Write the canonical decomposition of a point in $\cP$ as
\begin{equation}
 \theta=e^{h_0}\bigotimes_{a=1}^kP_a,
 \qquad \det P_a=1,
 \qquad F_a(\theta)=\log P_a\in\Sym_0(d_a).                    \label{eq:canonical-log}
\end{equation}
We first relate factor logs to the full affine-invariant geometry.
\begin{lemma}[Factor-log Lipschitz bound]
\label{lem:factor-log}
For $\theta,\theta'\in\mathcal C_R$,
\begin{equation}
 \norm{F_a(\theta)-F_a(\theta')}_F
 \leq C_R\sqrt{d_a}\,d(\theta,\theta').                        \label{eq:factor-log-lipschitz}
\end{equation}
\end{lemma}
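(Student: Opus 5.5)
We will join $\theta$ and $\theta'$ by the constant-speed geodesic $\Theta_s$, $0\le s\le1$, and follow the factor logarithms along it. Since $\mathcal C_R$ is geodesically convex (Section~\ref{sec:transport}), the whole geodesic stays in $\mathcal C_R$. The Kronecker manifold is totally geodesic, so every $\Theta_s$ is again a product. Write its canonical decomposition as $\Theta_s=e^{h_0(s)}\bigotimes_aP_a(s)$. Then $F_a(\Theta_s)=\log P_a(s)$, and it suffices to show
\[
\Big\|\tfrac{d}{ds}\log P_a(s)\Big\|_F\le C_R\sqrt{d_a}\,d_*,
\qquad d_*=d(\theta,\theta'),
\]
and then integrate over $s\in[0,1]$.

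First we compute the factor velocity. By \eqref{eq:exponential-map}, the geodesic through $\Theta_s$ with statistical tangent $H(s)$ is generated factorwise by $P_a(s)^{1/2}e^{\sqrt{d_a}uH_a(s)}P_a(s)^{1/2}$. Any scalar part of the velocity is absorbed into $h_0$, and the determinant normalization is preserved because $\tr H_a=0$. Hence
\[
\dot P_a(s)=\sqrt{d_a}\,P_a^{1/2}H_a(s)P_a^{1/2},
\qquad
\textstyle\sum_a\|H_a(s)\|_F^2\le\|H(s)\|^2=d_*^2 .
\]
Therefore $\|P_a^{-1/2}\dot P_aP_a^{-1/2}\|_F\le\sqrt{d_a}\,d_*$.

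The second step, and the main obstacle, is to pass from $\dot P_a$ to $\frac{d}{ds}\log P_a$. This requires bounding the Fréchet derivative of the matrix logarithm uniformly, and the bound must not depend on the unknown factors. This is exactly where membership in $\mathcal C_R$ enters. As in the compactness argument of Section~\ref{sec:transport}, $\kappa(P_a)\le e^{2R}$, and $\det P_a=1$ then forces $\sigma(P_a)\subset[e^{-2R},e^{2R}]$.

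In an eigenbasis of $P_a$ with eigenvalues $\mu_i$, the derivative of $\log$ acts on matrix entries by multiplication with the divided differences $(\log\mu_i-\log\mu_j)/(\mu_i-\mu_j)$. Each of these equals $1/\xi$ for some $\xi$ between $\mu_i$ and $\mu_j$. So
\[
\Big\|\tfrac{d}{ds}\log P_a\Big\|_F\le e^{2R}\|\dot P_a\|_F\le e^{2R}\|P_a\|_{\rm op}\,\big\|P_a^{-1/2}\dot P_aP_a^{-1/2}\big\|_F\le e^{4R}\sqrt{d_a}\,d_* .
\]
We could instead work directly in the conjugated tangent form and apply the entrywise formula $\frac{\log\mu_i-\log\mu_j}{\mu_i-\mu_j}\sqrt{\mu_i\mu_j}\le 1$ (the log-mean inequality). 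That gives the slightly cleaner constant $1$, but either version suffices.

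Integrating over $s\in[0,1]$ then proves \eqref{eq:factor-log-lipschitz} with $C_R=e^{4R}$, which is universal since $R=1$. A minor point needs checking: $s\mapsto P_a(s)$ must be differentiable with the stated velocity. This follows because the canonical decomposition is a smooth function of $\Theta$ on $\cP$, obtained from the partial-trace-free normalization $\det P_a=1$.
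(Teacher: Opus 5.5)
Your proof is correct, but it takes a different route from the paper. The paper never differentiates the canonical factors. It recovers $F_a(\theta)=\PiZero[(d_a/D)\tr_{-a}\{\log\theta\}]$ from the full $D\times D$ logarithm and uses that $(d_a/D)\tr_{-a}$ has Frobenius operator norm $\sqrt{d_a/D}$. It then compares full logarithms along the ambient geodesic, $\norm{\log\theta-\log\theta'}_F\leq e^{2R}\sqrt D\,d(\theta,\theta')$. The $\sqrt D$ cancels, and the $\sqrt{d_a}$ comes from the partial trace.

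You instead work mode by mode and read the factor velocity $\dot P_a=\sqrt{d_a}P_a^{1/2}H_aP_a^{1/2}$ off the exponential map, so your $\sqrt{d_a}$ comes directly from the generator $K_H$. The paper's route needs only the spectral bound $[e^{-R},e^{R}]$ on the full $\Theta_s$ and no information about how individual factors move. Yours needs the spectra of the determinant-normalized factors, which only lie in the wider interval $[e^{-2R},e^{2R}]$. That is why your crude constant is $e^{4R}$ against the paper's $e^{2R}$.

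Your log-mean variant gives constant $1$ and never uses $\Theta_s\in\mathcal C_R$. It is the infinitesimal form of the exponential-metric-increasing inequality $\norm{\log A-\log B}_F\leq\norm{\log(A^{-1/2}BA^{-1/2})}_F$. So the lemma in fact holds on all of $\cP$ with $C_R=1$, and the same inequality could replace \eqref{eq:log-distance} in the paper's argument.

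Your closing appeal to smoothness of the canonical decomposition is unnecessary. The identity $P_a(s+u)=P_a(s)^{1/2}e^{\sqrt{d_a}uH_a(s)}P_a(s)^{1/2}$, which you already derived, gives differentiability with the stated derivative. Together with your uniform derivative bound, that is all the integration step needs.
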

\begin{proof}
The canonical factor log is recovered from the full log by
\begin{equation}
 F_a(\theta)=\PiZero\left[{d_a\over D}
                \tr_{-a}\{\log\theta\}\right],                \label{eq:recover-factor-log}
\end{equation}
where $\PiZero$ is traceless projection.  The induced
Frobenius-to-Frobenius norm of $(d_a/D)\tr_{-a}$ equals
$\sqrt{d_a/D}$.
It remains to compare the full matrix logarithms.  Let $A_s$ be the
affine-invariant geodesic from $A$ to $B$, with velocity $V_s$.  The order
interval in \eqref{eq:thompson-ball} is geodesically convex, and along this
path
\begin{equation}
 \norm{D\log_{A_s}[V_s]}_F
 \leq e^R\norm{V_s}_F
 \leq e^{2R}\norm{A_s^{-1/2}V_sA_s^{-1/2}}_F.                 \label{eq:dlog-bound}
\end{equation}
The final norm is the constant affine-invariant speed.  Integration gives
\begin{equation}
 \norm{\log A-\log B}_F
 \leq e^{2R}\norm{\log(A^{-1/2}BA^{-1/2})}_F.                  \label{eq:log-distance}
\end{equation}
Combining \eqref{eq:recover-factor-log}, \eqref{eq:log-distance} and
$\norm{\log(\theta^{-1/2}\theta'\theta^{-1/2})}_F=\sqrt D\,
d(\theta,\theta')$ proves the lemma.
\end{proof}
For every $x\in\cG$, the constrained minimizer $\theta_x$ is well defined:
it exists by compactness of $\mathcal C_R$ and is unique by
Lemma~\ref{lem:hessian-transport}.
By Lemmas~\ref{lem:solution-map} and \ref{lem:factor-log}, the map
\begin{equation}
 x\longmapsto F_a(\theta_x),\qquad x\in\cG,                    \label{eq:good-factor-map}
\end{equation}
is $C_R\sqrt{d_a/M}$-Lipschitz into the Hilbert space
$\Sym_0(d_a)$.  Kirszbraun's extension theorem
\citep{kirszbraun1934transformationen} extends it to all of $\R^M$ without
increasing the Lipschitz constant.  Let $\widetilde F_a$ be one such
extension and average it over the local orthogonal group:
\begin{equation}
 \overline F_a(x)=
 \int_{\prod_bO(d_b)}U_a^T\widetilde F_a
 \left((\otimes_bU_b)x\right)U_a\,dU.                          \label{eq:reynolds-average}
\end{equation}
Here $dU$ denotes normalized Haar probability measure on the compact product
group.
The extension $\widetilde F_a$ is Lipschitz and hence continuous.  Since
the local orthogonal group is compact, the integrand in
\eqref{eq:reynolds-average} is continuous and bounded, so the displayed
Bochner integral is well defined and measurable.
The event $\cG$ and the constrained minimizer are equivariant.  Hence, for
$x\in\cG$, every integrand in \eqref{eq:reynolds-average} equals
$F_a(\theta_x)$, so $\overline F_a$ agrees with
\eqref{eq:good-factor-map} on $\cG$.  It remains
$C_R\sqrt{d_a/M}$-Lipschitz, takes values in $\Sym_0(d_a)$, and is globally
equivariant.
For standard Gaussian data, $\E\overline F_a(x)$ commutes with every
orthogonal $d_a\times d_a$ matrix.  It is therefore a scalar multiple of
the identity.  Since it is traceless, the expectation is exactly zero.
For every unit vector $u$, the scalar map
$x\mapsto u^T\overline F_a(x)u$ is centered and
$C_R\sqrt{d_a/M}$-Lipschitz.  Gaussian concentration
\citep[Chapter 5]{boucheron2013concentration} therefore implies
\begin{equation}
 \Prob\left\{|u^TF_a(\theta_x)u|>s,\ x\in\cG\right\}
 \leq2\exp\left\{-{cMs^2\over d_a}\right\}.                    \label{eq:scalar-concentration}
\end{equation}
No exceptional-set bias appears: concentration is applied to the global
averaged extension, which is exactly centered, and the extension equals
the optimizer map on $\cG$.
Let $\mathcal N_a$ be a $1/4$-net of the unit sphere with
$|\mathcal N_a|\leq9^{d_a}$.  For symmetric $A$,
$\norm A_{\rm op}\leq2\max_{u\in\mathcal N_a}|u^TAu|$.
Taking $s=R/(8k)$ in \eqref{eq:scalar-concentration} gives
\begin{equation}
 \Prob\left\{\norm{F_a(\theta_x)}_{\rm op}>{R\over4k},
                 \ x\in\cG\right\}
 \leq2\exp\left\{Cd_a-{cMR^2\over k^2d_a}\right\}.             \label{eq:operator-localization}
\end{equation}
Under \eqref{eq:main-threshold}, a union bound over the modes makes the
right-hand side at most $Ck\exp\{-ct^2\dmax\}$ after increasing the
universal sample constant.
The same net argument at the statistical scale will give the Thompson rates.
Taking $s=C_0td_a/\sqrt M$ in \eqref{eq:scalar-concentration}, with $C_0$
sufficiently large, yields
\begin{equation}
 \Prob\left\{\norm{F_a(\theta_x)}_{\rm op}
       >{Ctd_a\over\sqrt M},\ x\in\cG\right\}
 \leq 2e^{-ct^2d_a}.                                         \label{eq:adaptive-operator-localization}
\end{equation}
\section{Proof of the main theorem}
\label{sec:main-proof}
By equivariance, it suffices first to consider true precision $I_D$.  On
the gradient event \eqref{eq:gradient-concentration}, take
\begin{equation}
 \eps={t\dmax\over\sqrt M}.                                    \label{eq:epsilon-final}
\end{equation}
The sample condition ensures $\eps<1$, and
\eqref{eq:gradient-concentration} yields
\begin{equation}
 \norm{\nabla f_x(I_D)}
 \leq C\sqrt{k}\,{t\dmax\over\sqrt M}                          \label{eq:gradient-final}
\end{equation}
outside an event of probability at most $Ck e^{-ct^2\dmax}$.
Because $I_D\in\mathcal C_R$ and $\theta_x$ minimizes over
$\mathcal C_R$, strong convexity and comparison with $I_D$ imply
\begin{equation}
 d(I_D,\theta_x)
 \leq {2\over\lambda_R}\norm{\nabla f_x(I_D)}
 \leq C_R\sqrt{k}\,{t\dmax\over\sqrt M}.                       \label{eq:distance-to-identity}
\end{equation}
In the canonical decomposition \eqref{eq:canonical-log},
$|h_0|\leq d(I_D,\theta_x)$.  On the event
\eqref{eq:operator-localization} for every mode,
\begin{align}
 \norm{\log\theta_x}_{\rm op}
 &\leq |h_0|+\sum_a\norm{F_a(\theta_x)}_{\rm op} \notag\\
 &\leq C_R\sqrt{k}\,{t\dmax\over\sqrt M}+{R\over4}
 <{R\over2},                                                   \label{eq:strict-interior}
\end{align}
where the final inequality follows by increasing $C$ in
\eqref{eq:main-threshold}.  Thus the constrained minimizer lies strictly
inside $\mathcal C_R$ and is an unconstrained critical point of $f_x$.
Global geodesic convexity makes it a global minimizer.  It is unique: if a
second minimizer existed, the objective would be constant on the joining
geodesic, contradicting the strictly positive Hessian on the initial
segment contained in $\mathcal C_R$.
Equation \eqref{eq:distance-to-identity} and
\eqref{eq:metric-normalization} give
\begin{equation}
 d_{\rm FR}(\widehat\Theta,I_D)
 \leq C{\sqrt{k}\,\dmax\over\sqrt n}\,t.                       \label{eq:full-rate-proof}
\end{equation}
For the factors, write
$\widehat\Theta_a=e^{h_0/k}P_a$ and $F_a=\log P_a$.  The coordinate
normalization gives
\begin{equation}
 d(I_D,\widehat\Theta)^2
 =h_0^2+\sum_b{\norm{F_b}_F^2\over d_b},                       \label{eq:coordinate-distance}
\end{equation}
while
\begin{align}
 d_{\rm FR}(\widehat\Theta_a,I_{d_a})^2
 &={1\over2}\left({d_ah_0^2\over k^2}+\norm{F_a}_F^2\right)
 \leq {d_a\over2}d(I_D,\widehat\Theta)^2.                      \label{eq:factor-distance}
\end{align}
Combining \eqref{eq:factor-distance} and
\eqref{eq:distance-to-identity} proves \eqref{eq:main-factor-rate}.
For the Thompson bounds, \eqref{eq:adaptive-operator-localization} and
$|h_0|\leq C_R\sqrt{k}\,t\dmax/\sqrt M$ give
\begin{align}
 d_{\rm op}(\widehat\Theta_a,I_{d_a})
 &=\norm{{h_0\over k}I_{d_a}+F_a}_{\rm op} \notag\\
 &\leq {Ct\over\sqrt M}
       \left(d_a+{\dmax\over\sqrt{k}}\right).                \label{eq:factor-op-proof}
\end{align}
A union bound proves Corollary~\ref{cor:factor-op}; the gradient and
localization events are absorbed into the last term of
\eqref{eq:op-success-probability} because
$ke^{-ct^2\dmax}\leq\sum_ae^{-ct^2d_a}$.  If $d_a=\dmax$, the
failure probability in \eqref{eq:adaptive-operator-localization} is
$O(e^{-ct^2\dmax})$, which is absorbed into
\eqref{eq:success-probability}; this proves
\eqref{eq:main-largest-op-rate}.
For a general true product precision $\Theta$, whiten the observations by
$\Theta^{1/2}$.  To verify the factor normalization explicitly, write the
true precision and the MLE for the whitened data in canonical form as
$\Theta=\bigotimes_a\Theta_a$ and
$\widehat\Omega=\bigotimes_a\widehat\Omega_a$.  The determinant roots of
$\Theta_a^{1/2}\widehat\Omega_a\Theta_a^{1/2}$ are the products of the two
common determinant roots and hence are again identical across $a$.
Consequently these matrices are precisely the canonical factors of
$\Theta^{1/2}\widehat\Omega\Theta^{1/2}$.  The MLE transforms by this product
congruence, while both the Fisher--Rao and Thompson metrics are congruence
invariant, so all the same factor bounds hold at $\Theta$; see
\citet[Fact 2.6]{franks2026near}.  The failure probabilities are supplied by
Lemma~\ref{lem:full-lie}, \eqref{eq:gradient-concentration} and
\eqref{eq:operator-localization}, together with
\eqref{eq:adaptive-operator-localization} for the Thompson conclusions.
This proves Theorem~\ref{thm:main} and Corollary~\ref{cor:factor-op}.
\section{Information-theoretic comparison}
\label{sec:lower-bound}
For $r_*>0$, define
\[
 \mathcal B_d^0(r_*)=
 \{A\in\PD(d):\det A=1,\ d_{\rm op}(A,I_d)\leq r_*\}.
\]
\begin{proposition}[Gaussian submodel lower bounds]
\label{prop:lower-bound}
There are universal constants $c,r_*>0$ such that the following holds.  For
every estimator $\widetilde\Theta_a$ of the canonical factor in mode $a$,
\begin{align}
 \sup_{\Theta_a\in\mathcal B_{d_a}^0(r_*)}
 \Prob_{\Theta_a}\left\{
 d_{\rm FR}(\widetilde\Theta_a,\Theta_a)
 \geq c\min\left(\sqrt{d_a},{d_a^{3/2}\over\sqrt{nD}}\right)\right\}
 &\geq {1\over2},                                             \label{eq:lower-fr-factor}\\
 \sup_{\Theta_a\in\mathcal B_{d_a}^0(r_*)}
 \Prob_{\Theta_a}\left\{
 d_{\rm op}(\widetilde\Theta_a,\Theta_a)
 \geq c\min\left(1,{d_a\over\sqrt{nD}}\right)\right\}
 &\geq {1\over2}.                                             \label{eq:lower-op-factor}
\end{align}
Here all factors other than $a$ are fixed at the identity.  For every
$\PD(D)$-valued estimator $\widetilde\Theta$ of the full precision,
\begin{equation}
 \sup_{\Theta\in\cP}
 \Prob_\Theta\left\{
 d_{\rm FR}(\widetilde\Theta,\Theta)
 \geq c\min\left(\sqrt D,{\dmax\over\sqrt n}\right)\right\}
 \geq {1\over2}.                                              \label{eq:lower-fr-full}
\end{equation}
Consequently, the corresponding minimax expected squared risks are bounded
below by constant multiples of
$\min\{d_a,d_a^3/(nD)\}$, $\min\{1,d_a^2/(nD)\}$ and
$\min\{D,\dmax^2/n\}$, respectively.
\end{proposition}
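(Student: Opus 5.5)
We prove all three bounds by Fano's inequality applied to Gaussian submodels built from Varshamov--Gilbert packings, and treat the minimax risk statements as immediate corollaries.

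\emph{Factor bounds.} Fix mode $a$ and set all other factors to the identity. Then the data are equivalent to $N=nD/d_a$ independent $N(0,\Theta_a^{-1})$ vectors in $\R^{d_a}$. The parameters are
\[
 \Theta_a=\exp(\delta\,S_\omega),\qquad S_\omega\in\Sym_0(d_a),
\]
indexed by a packing $\omega$, with $\delta$ chosen so that $d_{\rm op}\leq r_*$.

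\begin{itemize}
\item \emph{Fisher--Rao bound \eqref{eq:lower-fr-factor}.} Use a Varshamov--Gilbert packing of sign patterns on the upper triangle, as in the standard covariance lower bounds. This gives $S_\omega$ with $\norm{S_\omega}_F^2\asymp d_a^2$ and pairwise Frobenius separation of order $d_a$, using about $e^{cd_a^2}$ points. By a random-matrix bound, a constant fraction of sign patterns also have $\norm{S_\omega}_{\rm op}\lesssim\sqrt{d_a}$, so we retain only those. The KL divergence between $N$-sample laws is $\asymp N\delta^2\norm{S_\omega-S_{\omega'}}_F^2\lesssim N\delta^2d_a^2$. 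Fano then allows $\delta^2\asymp \min(1/d_a,\,1/N)$. Since $\delta\sqrt{d_a}\leq r_*$ keeps us in $\mathcal B^0_{d_a}(r_*)$, the separation in $d_{\rm FR}$ is $\delta d_a\asymp\min(\sqrt{d_a},\,d_a/\sqrt N)$. Substituting $N=nD/d_a$ gives $d_a^{3/2}/\sqrt{nD}$.
\item \emph{Thompson bound \eqref{eq:lower-op-factor}.} This is the harder packing. We need $\delta S_\omega$ separated by order $\min(1,d_a/\sqrt{nD})$ in operator norm while keeping the KL divergence $\lesssim d_a$. Try perturbations of the form $\delta(P_\omega-\tfrac{d_a/2}{d_a}I)$, where $P_\omega$ is the projection onto a $d_a/2$-dimensional subspace and the subspaces come from a Grassmannian packing with $\norm{P_\omega-P_{\omega'}}_{\rm op}\geq1/2$. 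Such a packing has cardinality $e^{cd_a^2}$, while $\norm{P_\omega-P_{\omega'}}_F^2\lesssim d_a$. The KL divergence is then $N\delta^2d_a\lesssim d_a^2$, so $\delta\asymp\min(r_*,\sqrt{d_a/N})=\min(r_*,d_a/\sqrt{nD})$. Operator separation $\delta/2$ finishes the bound.
\item \emph{Main obstacle.} The Grassmannian packing with operator separation is the crux. Concretely, take random half-dimensional subspaces; pairwise principal angles bounded below hold with probability $1-e^{-cd_a^2}$ by concentration on the Grassmannian, and a union bound then works.
\end{itemize}

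\emph{Full bound \eqref{eq:lower-fr-full}.} Apply the factor construction in a mode with $d_a=\dmax$. Then $\Theta=\Theta_a\otimes I$, and
\[
 d_{\rm FR}(\Theta,\Theta')=\sqrt{D/d_a}\,d_{\rm FR}(\Theta_a,\Theta_a').
\]
Any $\PD(D)$-valued estimator can be projected to its nearest packing point, costing at most a factor $2$ by the triangle inequality. This yields $\sqrt{D/\dmax}\min(\sqrt{\dmax},\,\dmax^{3/2}/\sqrt{nD})=\min(\sqrt D,\,\dmax/\sqrt n)$.

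\emph{Risk statements.} The expected squared-risk bounds follow from Markov's inequality: the probability-$1/2$ bounds give $\E[\cdot^2]\geq \tfrac12 c^2(\cdot)^2$.
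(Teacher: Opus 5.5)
Your proof is correct, but it differs from the paper's in all three parts.

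\emph{Fisher--Rao factor bound.} The paper packs half-rank projections in Frobenius norm and perturbs along $H_j=P_j-(r/d)I$. These satisfy $\norm{H_j}_{\rm op}\leq1$ automatically, so $\delta^2\asymp\min\{1,d/N\}$ needs no random-matrix input. Your sign matrices need the extra step $\norm{S_\omega}_{\rm op}\lesssim\sqrt{d_a}$. Implement it by running the Gilbert--Varshamov greedy construction inside the set of good sign patterns, not by filtering a fixed code.

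\emph{Thompson bound.} The paper sidesteps what you call the crux. It uses rank-one spikes $u_ju_j^T-d^{-1}I$ from an $e^{cd}$-point sphere packing with $\abs{u_i^Tu_j}\leq1/2$. These have Frobenius norm at most one, so the ratio of KL divergence to log-cardinality is $N\delta^2/d$. That is exactly the ratio in your half-rank construction (KL of order $N\delta^2d$ against $d^2$), so the same rate follows from an elementary packing. Your operator-separated Grassmannian packing does exist, but the $e^{-cd^2}$ pairwise tail cannot come from Lipschitz concentration of $\norm{P-Q}_{\rm op}$ itself. That gives only $e^{-cd}$, which is useless against a union bound over $e^{cd^2}$ pairs. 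Pass through Frobenius quantities instead: $\norm{P-Q}_{\rm op}<1/2$ forces $\norm{P-Q}_F^2<d/4$, i.e.\ $\tr(PQ)>3d/8$. Meanwhile $\E\tr(PQ)=d/4$ and $Q\mapsto\tr(PQ)$ is $O(\sqrt d)$-Lipschitz, so concentration on the orthogonal group at rate $\exp(-cdt^2/L^2)$ gives $e^{-cd^2}$. Alternatively, cite Szarek's nets.

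\emph{Local comparisons.} Both routes need $d_{\rm FR}(e^{\delta H},e^{\delta K})\gtrsim\delta\norm{H-K}_F$ and $d_{\rm op}(e^{\delta H},e^{\delta K})\gtrsim\delta\norm{H-K}_{\rm op}$ for noncommuting $H,K$ in a fixed order interval. You use these silently; the paper proves them.

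\emph{Full bound.} You rerun Fano in $(\PD(D),d_{\rm FR})$ on the embedded packing. This uses the exact scaling $d_{\rm FR}(\Theta_a\otimes I,\Theta_a'\otimes I)=\sqrt{D/d_a}\,d_{\rm FR}(\Theta_a,\Theta_a')$ and the minimum-distance test. The paper instead projects $\widetilde\Theta$ onto $\cP$, extracts a mode-$a$ estimator, and uses orthogonality of the embedded mode logarithms. Your version is shorter and needs none of the closedness, geodesic-convexity or measurability facts about the projection. The paper's is a black-box reduction that transfers any factor lower bound, not only a Fano-based one, to the full precision.
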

\begin{proof}
We first prove the unit-determinant version of the classical Gaussian lower
bound.  Let $N$ observations have distribution $N(0,A^{-1})$ in dimension
$d$.  For symmetric traceless $H,K$ with operator norm at most one and
$0<\delta\leq r_*$, put $A_H=e^{\delta H}$.  If $r_*$ is a sufficiently
small numerical constant, then
\begin{align}
 d_{\rm FR}(A_H,A_K)&\geq c\delta\norm{H-K}_F,                 \label{eq:local-fr-lower}\\
 d_{\rm op}(A_H,A_K)&\geq c\delta\norm{H-K}_{\rm op}.         \label{eq:local-op-lower}
\end{align}
Indeed, $A_H,A_K$ lie in a fixed order interval.  Equation
\eqref{eq:log-distance}, applied in that interval, gives
\[
 \delta\norm{H-K}_F
 \leq C\norm{\log(A_H^{-1/2}A_KA_H^{-1/2})}_F,
\]
which proves \eqref{eq:local-fr-lower}.  For the operator norm, if
$r=d_{\rm op}(A_H,A_K)$, then
$e^{-r}A_H\preceq A_K\preceq e^rA_H$.  Hence
$\norm{A_H-A_K}_{\rm op}\leq Cr$, and the operator-Lipschitz bound for the
matrix logarithm on a fixed positive interval, obtained from its resolvent
integral representation, gives
$\delta\norm{H-K}_{\rm op}\leq Cr$, proving
\eqref{eq:local-op-lower}.  Moreover,
\begin{equation}
 D\{N(0,A_H^{-1})^{\otimes N}\|N(0,I_d)^{\otimes N}\}
 ={N\over2}\{\tr(e^{-\delta H})-d\}
 \leq CN\delta^2\norm H_F^2.                                 \label{eq:local-kl}
\end{equation}

For the Fisher--Rao bound, let $r=\lfloor d/2\rfloor$.  Volumetric packing of
the $r(d-r)$-dimensional Grassmann manifold gives $e^{cd^2}$ rank-$r$
orthogonal projections $P_j$ such that
$\norm{P_i-P_j}_F\geq c\sqrt d$ for $i\ne j$.  Put
$H_j=P_j-(r/d)I_d$.  Then $H_j\in\Sym_0(d)$,
$\norm{H_j}_{\rm op}\leq1$, $\norm{H_j}_F^2\leq d$, and the same pairwise
Frobenius separation holds.  Apply Fano's inequality to $A_{H_j}$ with
$\delta^2=c_0\min\{1,d/N\}$.  Equations
\eqref{eq:local-fr-lower} and \eqref{eq:local-kl} give
\begin{equation}
 \sup_{A\in\mathcal B_d^0(r_*)}
 \Prob_A\left\{d_{\rm FR}(\widetilde A,A)
 \geq c\min(\sqrt d,d/\sqrt N)\right\}\geq{1\over2}.       \label{eq:gaussian-fr-lower}
\end{equation}
For the Thompson bound, a volumetric packing of real projective space gives
$e^{cd}$ unit vectors $u_j$ with
$|u_i^Tu_j|\leq1/2$ for $i\ne j$, and put
$H_j=u_ju_j^T-d^{-1}I_d$.  These matrices are traceless,
$\norm{H_j}_F\leq1$, and
$\norm{H_i-H_j}_{\rm op}\geq\sqrt3/2$.  Fano's inequality with
$\delta^2=c_0\min\{1,d/N\}$, together with
\eqref{eq:local-op-lower} and \eqref{eq:local-kl}, gives
\begin{equation}
 \sup_{A\in\mathcal B_d^0(r_*)}
 \Prob_A\left\{d_{\rm op}(\widetilde A,A)
 \geq c\min(1,\sqrt{d/N})\right\}\geq{1\over2}.             \label{eq:gaussian-op-lower}
\end{equation}

In the tensor submodel with all nuisance factors fixed at the identity,
flattening over mode $a$ gives
\begin{equation}
 N_a={nD\over d_a}                                             \label{eq:effective-samples}
\end{equation}
independent $d_a$-dimensional Gaussian observations.  Since the truths in
$\mathcal B_{d_a}^0(r_*)$ have determinant one, their canonical factors are
exactly $\Theta_a$ in mode $a$ and identities elsewhere.  Substituting
$d=d_a$ and $N=N_a$ into \eqref{eq:gaussian-fr-lower} and
\eqref{eq:gaussian-op-lower} proves
\eqref{eq:lower-fr-factor}--\eqref{eq:lower-op-factor}.

It remains to prove \eqref{eq:lower-fr-full}.  Let
$\widetilde\Theta^{\cP}$ be the metric projection of
$\widetilde\Theta$ onto $\cP$.  The set $\cP$ is closed and geodesically
convex in the Hadamard manifold $\PD(D)$.  Geodesic convexity follows by
factorizing the affine-invariant geodesic.  For closedness, along a convergent
sequence in $\cP$ the condition numbers are bounded; after canonical
normalization the factors range over compact sets, and a subsequential factor
limit represents the matrix limit.  Hence the projection exists uniquely
and is measurable.  The nearest-point
property implies, for every $\Theta\in\cP$,
\begin{equation}
 d_{\rm FR}(\widetilde\Theta^{\cP},\Theta)
 \leq2d_{\rm FR}(\widetilde\Theta,\Theta).                    \label{eq:nearest-product-rounding}
\end{equation}
Choose a largest mode $a$ and restrict the truth to
\[
 \Theta(A)=I_{d_1}\otimes\cdots\otimes A\otimes\cdots\otimes I_{d_k},
 \qquad A\in\mathcal B_{\dmax}^0(r_*).
\]
Write
$\widetilde\Theta^{\cP}=e^h\bigotimes_bP_b$ with $\det P_b=1$, and set
$\widetilde A=e^hP_a$.  This is a measurable estimator because the canonical
decomposition is continuous, for example by \eqref{eq:recover-factor-log}.  If
\[
 L_a=\log(A^{-1/2}\widetilde A A^{-1/2}),
 \qquad L_b=\log P_b\quad(b\ne a),
\]
then the embedded mode matrices are Frobenius orthogonal and
\begin{align}
 2d_{\rm FR}(\widetilde\Theta^{\cP},\Theta(A))^2
 &=\norm{L_a^{(a)}+\sum_{b\ne a}L_b^{(b)}}_F^2 \notag\\
 &\geq {D\over\dmax}\norm{L_a}_F^2
 ={2D\over\dmax}d_{\rm FR}(\widetilde A,A)^2.                \label{eq:metric-embedding}
\end{align}
Apply \eqref{eq:gaussian-fr-lower} to $\widetilde A$ with
$d=\dmax$ and $N=nD/\dmax$.  Multiplication of the lower radius by
$\sqrt{D/\dmax}$ gives
$c\min\{\sqrt D,\dmax/\sqrt n\}$.  Equations
\eqref{eq:nearest-product-rounding} and \eqref{eq:metric-embedding} therefore
give \eqref{eq:lower-fr-full}.  The expected-risk conclusions follow from
$\E Z^2\geq r^2\Prob\{Z\geq r\}$.
\end{proof}

Under \eqref{eq:main-threshold}, the nonsaturated terms in all three lower
bounds apply.  For a largest factor, the Fisher--Rao radius in
\eqref{eq:main-factor-rate} differs from the lower radius in
\eqref{eq:lower-fr-factor} by at most $C\sqrt{k}t$.  The same comparison
holds for the full precision.  The Thompson bound
\eqref{eq:main-largest-op-rate} matches \eqref{eq:lower-op-factor} up to the
confidence parameter $t$.  Thus constant operator accuracy requires
$nD=\Omega(\dmax^2)$, and the dependence of the sample threshold in
Theorem~\ref{thm:main} on $\dmax$ is optimal for fixed $k$.
\section{Discussion}
Theorem~\ref{thm:main} resolves the explicit question posed in Section 6
of \citet{franks2026near}: the guarantees of their tensor-normal MLE theorem
continue to hold when $\dmax^3$ in the sample threshold is replaced by
$\dmax^2$.  The proof uses the full-matrix form of the random-channel bound,
exact Hessian transport and equivariant concentration of a constrained
optimizer.  We do not determine the optimal dependence on $k$ or prove
flip-flop convergence at the improved threshold.
\section*{Acknowledgement}
GPT-5.6 Sol and Claude Fable 5 (both used under the highest-tier subscription mode) were used to assist with proof development, verification and paper writing. 
\bibliographystyle{biometrika}
\bibliography{references}
\renewcommand{\printhistory}{}
\end{document}